\DeclareSymbolFont{bbold}{U}{bbold}{m}{n}
\DeclareSymbolFontAlphabet{\mathbbold}{bbold}
\newcommand{\onev}{\mathbbold{1}}
\title{\LARGE \bf
State-feedback {S}tabilization of Markov Jump {L}inear {S}ystems\\
with {R}andomly {O}bserved Markov {S}tates
}
\author{Masaki Ogura$^{1}$ and Ahmet Cetinkaya$^{2}$
\thanks{$^{1}$M.~Ogura is with the Department of 
Mathematics and Statistics, Texas Tech University, TX 79409, USA. 
        {\tt\small msk.ogura@gmail.com}, {\tt\small masaki.ogura@ttu.edu}}%
\thanks{$^{2}$Ahmet Cetinkaya is with the Department of Mechanical
and Environmental Informatics, Tokyo Institute of Technology,
Tokyo 152-8552, Japan. {\tt\small ahmet@dsl.mei.titech.ac.jp}}%
}
\newtheorem{theorem}{Theorem}[section]
\newtheorem{proposition}[theorem]{Proposition}
\newtheorem{lemma}[theorem]{Lemma}
\newtheorem{definition}[theorem]{Definition}
\newtheorem{example}[theorem]{Example}
\newtheorem{problem}[theorem]{Problem}
\newtheorem{remark}[theorem]{Remark}
\newenvironment{sketchproof}[1]{
\begin{proof}}{\end{proof}
}
\newcommand{\afterequation}{\vskip 3pt}
\newcommand{\norm}[1]{\lVert #1 \rVert}
\renewcommand{\Pr}{\mathcal P}
\newcommand{\oneto}[1]{[#1]}
\begin{document}

\maketitle
\thispagestyle{empty}
\pagestyle{empty}

\begin{abstract}
In this paper we study the state-feedback stabilization of a
discrete-time Markov jump linear system when the observation of the
Markov chain of the system, called the Markov state, is
time-randomized by another Markov chain. Embedding the Markov state
into an extended Markov chain, we transform the given system with
time-randomized observations to another one having the enlarged
Markov-state space but with so-called cluster observations of Markov
states. Based on this transformation we propose linear matrix
inequalities for designing stabilizing state-feedback gains for the
original Markov jump linear systems. The proposed method can treat
both periodic observations and many of renewal-type observations in a
unified manner, which are studied in the literature using different
approaches. A numerical example is provided to demonstrate the
obtained result.
\end{abstract}

\newcommand{\flo}[1]{\lfloor #1 \rfloor_T}

\section{Introduction}

Markov jump linear systems is a class of switched linear systems whose
switching is governed by a time-homogeneous Markov process, called the
Markov state, and have been attracting continuing attention due to its
simplicity as well as its ability of modeling systems in application
such as robotic systems~\cite{Siqueira2004,Vargas2013a},
economy~\cite{Costa2007,Barthelemy2013a}, and networked
systems~\cite{Hespanha2007a}. It is known that, under the assumption
that controllers can observe the Markov state at any time instants,
we can perform standard types of controller synthesis for Markov jump
linear systems such as state-feedback stabilization, quadratic optimal
control, $H^2$ optimal control, and $H^\infty$ optimal control~(see,
e.g., the monograph~\cite{Costa2005}).

However it is often not realistic to assume that controllers {\it
always} have an access to the Markov state and this fact has been
motivating the investigation of the effect of limited or uncertain
observations of the Markov state. For example, the authors
in~\cite{DoVal2002} study the stabilization and $H^2$-control of
discrete-time Markov jump linear systems when the Markov-state space
is partitioned into subsets, called clusters, and an observation of
the Markov state only tells us to which cluster the Markov-state
belongs. Similar studies in the continuous-time settings can be found
in~\cite{Liu2006,Li2006b}. Under an extreme situation when the
Markov-state space has only one cluster, i.e., when one cannot observe
the Markov state, Vargas et al.~\cite{Vargas2013} investigate
quadratic optimal control problems. 

Another but not the only source of uncertainty comes from the
randomness of the time instants at which one can observe the Markov
states. For the case when observation times follow a renewal process,
the authors in~\cite{Cetinkaya2013} design almost-surely stabilizing
state-feedback controllers whose gains are reset whenever an
observation is performed. For the special case when observations are
performed periodically, the same
authors~\cite{Cetinkaya2013a,Cetinkaya2014} derive stabilizing (in the
mean square sense) state-feedback controllers using {Lyapunov-like
functions}. We here remark that other various methods such as, for
example, adaptive strategies~\cite{Bercu2009} could be used to study
this type of problems, though we do not give a detailed survey of the
field in this paper.

In this paper we propose a unified method for designing stabilizing
state-feedback gains for a discrete-time Markov jump linear system
when the time instants at which a controller performs an observation
of the Markov state, called an observation process, is
time-randomized by another Markov chain. This Markov chain can be used
to model various types of observation processes including periodic
observations \cite{Cetinkaya2013a,Cetinkaya2014} and observations
following a renewal process~\cite{Cetinkaya2013}. By embedding the
original Markov-state to another one, we transform a Markov jump
linear system with time-randomized observations to another one with
clustered observations, for which we apply the result in
\cite{DoVal2002} and derive linear matrix inequalities for finding
stabilizing state-feedback gains.

This paper is organized as follows. After preparing the notations used
in this paper, in Section~\ref{sec:MJLS} we give a brief overview of
Markov jump linear systems and their stabilization. Then in
Section~\ref{sec:partObsv} we formulate the stabilization problem with
time-random observation of the Markov state. After showing an
embedding of the Markov state to another Markov chain in
Section~\ref{sec:hidden}, we in Section~\ref{sec:design} derive linear
matrix inequalities for the design of feedback gains. 

\subsection{Mathematical Preliminaries}

The notation used in this paper is standard. Let $\mathbb{N}$ denote
the set of nonnegative integers. Let $\mathbb{R}^n$ and
$\mathbb{R}^{n\times m}$ denote the vector spaces of real $n$-vectors
and $n\times m$ matrices, respectively. By $\norm{\cdot}$ we denote
the Euclidean norm on~$\mathbb{R}^n$. $\Pr(\cdot)$ will be used to
denote the probability of an event. The probability of an event
conditional on an event $\mathcal E$ is denoted by $\Pr(\cdot \mid
\mathcal E)$. Expectations are denoted by $E[\cdot]$. Characteristic
functions are denoted by $\onev(\cdot)$. For a positive integer $N$ we
define the set $\oneto N = \{1, \dotsc, N\}$. For a positive integer
$T$ define $\lfloor k \rfloor_T$ as the unique integer in $\oneto T$
such that $k - \lfloor k\rfloor_T$ is an integer multiple of $T$. 
When a real symmetric matrix $A$ is positive definite we write $A>0$.

\section{Markov Jump Linear Systems and Stabilization}\label{sec:MJLS}

The aim of this section is to give a brief overview of Markov jump
linear systems in discrete-time~\cite{Costa2005} and also recall some
basic definitions of their stability and stabilizability.

Let $n$, $m$, and $N$ be positive integers. Let $A_1, \dotsc, A_N \in
\mathbb{R}^{n\times n}$ and $B_1, \dotsc, B_N \in \mathbb{R}^{m\times
n}$. Also let $r = \{r(k) \}_{k=0}^\infty$ be a time-homogeneous
Markov chain taking its values in $\mathfrak X = \oneto N$ and having
the transition probability matrix~$P\in \mathbb{R}^{N\times N}$. We
call the stochastic difference equation
\begin{equation}\label{eq:MJSL_input}
\Sigma :
x(k+1) = A_{r(k)} x(k) + B_{r(k)} u(k)
\end{equation}
a Markov jump linear system~\cite{Costa2005}. We call $\mathfrak X$
the {\it Markov-state space} of $\Sigma$. Both the initial state $x(0)
= x_0 \in \mathbb{R}^n$ and the initial Markov state $r(0) = r_0\in
\mathfrak X$ are assumed to be constants. The (internal) mean square
stability of $\Sigma$ is defined in the following standard way.

\begin{definition}\label{defn:mss:usual}
$\Sigma$ is said to be {\it mean square stable} if there exist $C>0$
and $\epsilon>0$ such that the solution $x$ of \eqref{eq:MJSL_input}
satisfies
\begin{equation}\label{eq:MSS}
E[\norm{x(k)}^2] < C\epsilon^k\norm{x_0}^2
\end{equation}
for all $x_0$ and $r_0$, provided $u = 0$.
\end{definition}

In this paper we mainly discuss the stabilization of $\Sigma$ via
state-feedback controllers. If one assumes that the controller has an
exact access to the Markov state~$r(k)$ at each time $k\geq 0$, then
we can consider the following mode-dependent controller of the form
\begin{equation}\label{eq:state-fb-normal}
u(k) = K_{r(k)} x(k)
\end{equation}
where $K_1, \dotsc, K_N \in \mathbb{R}^{m\times n}$. We say that the
state-feedback controller~\eqref{eq:state-fb-normal} {\it stabilizes}
$\Sigma$ if the following Markov jump linear system without input
\begin{equation}\label{eq:MJLS_ABK}
x(k+1) = \left( A_{r(k)} + B_{r(k)} K_{r(k)}\right) x(k)
\end{equation}
is mean square stable. 

Another scenario, which is closely related to the current paper, is
the stabilization with so-called cluster observations of Markov
states~{(see }\cite{DoVal2002}{)}. {Following
the notation in \cite{DoVal2002}, we assume} that the Markov state
space~$\mathfrak X$ is decomposed as $\mathfrak X = \mathfrak X_h
\times \mathfrak X_o$, where $\mathfrak X_h$ and $\mathfrak X_o$ are
sets. Thus each \mbox{$i \in \mathfrak X$} can be represented as $i =
(i_h, j_o)$ by some $i_h \in \mathfrak X_h$ and $j_o \in \mathfrak
X_o$. The set $\mathfrak X_h$ ($\mathfrak X_o$) represents the
unobservable (observable, respectively) part of the Markov state
space~$\mathfrak X$. Let us define the projection~$\pi_o\colon
\mathfrak X \to \mathfrak X_o$ by $\pi_o(i_h, j_o) = j_0$. Then, the
state-feedback controller that can observe only the observable part of
the Markov state must take the form
\begin{equation}\label{eq:state-fb:cluster}
u(k) = K_{\pi_o(r(k))} x(k), 
\end{equation}
where $K_j \in \mathbb{R}^{m\times n}$ for each $j \in \mathfrak X_o$.
We say that this feedback controller stabilizes $\Sigma$ if the
solution~$x$ of the closed loop equation $x(k+1) = \left( A_{r(k)} +
B_{r(k)} K_{\pi_o(r(k))}\right) x(k)$ satisfies the condition in
Definition~\ref{defn:mss:usual}.

The following proposition~\cite[Theorem~6]{DoVal2002} gives linear
matrix inequalities whose solutions yield stabilizing feedback gains
for feedback control~\eqref{eq:state-fb:cluster} with clustered
observations. In order to state the proposition, for $i \in \mathfrak
X$ and a family of matrices~$\{R_i\}_{i\in \mathfrak X}\subset
\mathbb{R}^{n\times n}$ we define the matrix $\mathcal D_i(R) \in
\mathbb{R}^{n\times n}$ by {$\mathcal D_i(R) = \sum_{j=1}^N
p_{ji}R_j$.}

\begin{proposition}[{\cite[Theorem~6]{DoVal2002}}]\label{prop:doval}
Assume that the matrices $R_i \in \mathbb{R}^{n\times n}$, $G_j \in
\mathbb{R}^{m\times n}$, and $F_j\in\mathbb{R}^{m\times n}$ ($i\in
\mathfrak X_j$, $j\in \mathfrak X_o$) satisfy the matrix linear
inequalities
\begin{equation}
\begin{bmatrix}
R_i & A_iG_j + B_i F_j
\\
G_j^\top A_i^\top + F_j^\top B_i^\top & G_j + G_j^\top - \mathcal D_i(R)
\end{bmatrix} > 0
\end{equation}
for all $i\in \{j\} \times \mathfrak X_o$ and $j\in \mathfrak X_o$.
Define $K_j = F_j G_j^{-1}$ for each $j \in \mathfrak X_o$. Then the
feedback controller~\eqref{eq:state-fb:cluster} stabilizes $\Sigma$.
\end{proposition}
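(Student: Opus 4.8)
The plan is to establish mean square stability of the closed-loop system by constructing a quadratic Lyapunov-like function from the matrices $R_i$ and showing its expected decay. Since Proposition~\ref{prop:doval} is quoted from \cite[Theorem~6]{DoVal2002}, the cleanest route is to recover the standard coupled-Lyapunov-inequality characterization of mean square stability for clustered observations and then verify that a solution of the stated LMIs produces such a Lyapunov certificate together with the gains $K_j = F_j G_j^{-1}$.

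First I would recall that the closed-loop dynamics are $x(k+1) = \mathcal A_{r(k)} x(k)$ with $\mathcal A_i = A_i + B_i K_{\pi_o(i)}$, and that mean square stability of such a Markov jump linear system is equivalent to the existence of matrices $R_i > 0$ satisfying the coupled inequalities $\mathcal A_i^\top \mathcal D_i(R)^{-1} \mathcal A_i < R_i^{-1}$ for each $i \in \mathfrak X$ (this is the discrete-time stochastic Lyapunov condition specialized to the operator $\mathcal D_i$). The essential feature of the clustered setting is that the gain $K_{\pi_o(i)}$ depends only on the observable component $j = \pi_o(i)$, so the same $G_j$ and $F_j$ are shared across all $i \in \{j\} \times \mathfrak X_o$; this is exactly the index structure imposed in the hypothesis.

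Second I would extract the stability inequality from the LMI by a congruence-and-Schur-complement argument. Applying the Schur complement to the block inequality with pivot $R_i$ yields $G_j + G_j^\top - \mathcal D_i(R) - (A_i G_j + B_i F_j)^\top R_i^{-1}(A_i G_j + B_i F_j) > 0$. Then one uses the elementary matrix inequality $G_j^\top \mathcal D_i(R)^{-1} G_j \geq G_j + G_j^\top - \mathcal D_i(R)$ (which follows from $(G_j - \mathcal D_i(R))^\top \mathcal D_i(R)^{-1}(G_j - \mathcal D_i(R)) \geq 0$, valid because $\mathcal D_i(R) > 0$ as a positive combination of the $R_j$). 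Substituting $F_j = K_j G_j$ and performing a congruence by $G_j^{-\top}$ on the left and $G_j^{-1}$ on the right converts the resulting expression into $\mathcal A_i^\top R_i^{-1} \mathcal A_i < \mathcal D_i(R)^{-1}$, which is the desired coupled Lyapunov inequality (after the standard reindexing between the two equivalent forms). Invoking the equivalence recalled above then gives mean square stability of the closed loop, which is precisely the definition of stabilization for the controller~\eqref{eq:state-fb:cluster}.

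The main obstacle I anticipate is the bookkeeping around the operator $\mathcal D_i$ and the direction of the transition-probability indexing: the definition $\mathcal D_i(R) = \sum_{j} p_{ji} R_j$ sums over \emph{incoming} transitions, so I must be careful that the Lyapunov recursion I write down matches this convention rather than the more familiar $\sum_j p_{ij} R_j$. Since the statement is verbatim from \cite{DoVal2002}, the honest presentation is to note that the proof is given there and to reproduce only the congruence/Schur-complement core showing how the LMI encodes the coupled inequality; the remaining equivalence between that inequality and the decay estimate in Definition~\ref{defn:mss:usual} is the standard spectral-radius argument for the associated positive linear operator and need not be redone in detail.
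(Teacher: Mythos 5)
The paper itself gives no proof of Proposition~\ref{prop:doval}: it is imported verbatim from \cite[Theorem~6]{DoVal2002}. So your argument can only be judged on its own merits. Its algebraic core is correct and is indeed the standard mechanism behind such results: with $\mathcal A_i = A_i + B_i K_{\pi_o(i)}$, the Schur complement of the LMI with respect to the pivot $R_i$, the bound $G_j + G_j^\top - \mathcal D_i(R) \le G_j^\top \mathcal D_i(R)^{-1} G_j$, the substitution $F_j = K_j G_j$, and the congruence by $G_j^{-1}$ give
\begin{equation}
\mathcal A_i^\top R_i^{-1} \mathcal A_i < \mathcal D_i(R)^{-1},
\quad\text{equivalently}\quad
\mathcal A_i\, \mathcal D_i(R)\, \mathcal A_i^\top < R_i \ \text{ for all } i.
\end{equation}
(Two loose ends: $G_j$ is invertible because the $(2,2)$ block forces $G_j + G_j^\top > \mathcal D_i(R) \geq 0$; and $\mathcal D_i(R)$ can fail to be invertible --- it is the zero matrix when $p_{ji}=0$ for every $j$ --- in which case the congruence step must be bypassed, though the target inequality then holds trivially.)

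The genuine gap is the stability lemma you invoke to finish. You ``recall'' that mean square stability is equivalent to the existence of $R_i>0$ with $\mathcal A_i^\top \mathcal D_i(R)^{-1}\mathcal A_i < R_i^{-1}$, which by a Schur complement is the condition $\mathcal A_i R_i \mathcal A_i^\top < \mathcal D_i(R)$, and you then declare it to coincide with the derived inequality above ``after the standard reindexing between the two equivalent forms.'' Neither claim is true: the two conditions differ in whether $\mathcal A_i$ conjugates $\mathcal D_i(R)$ or $R_i$, and they are not equivalent. Concretely, take $n=1$, $N=2$, $p_{ij}=1/2$ for all $i,j$, $\mathcal A_1^2 = 2$, $\mathcal A_2^2 = 1/10$, $R_1 = 1/10$, $R_2 = 9/10$, so that $\mathcal D_1(R)=\mathcal D_2(R)=1/2$. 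Your recalled condition holds ($0.2 < 0.5$ and $0.09 < 0.5$), yet the system is not mean square stable: writing $q_i(k)=E[\norm{x(k)}^2\onev(r(k)=i)]$ one gets $q_1(k+1)=q_2(k+1)=q_1(k)+\tfrac{1}{20}q_2(k)$, so after one step the second moment grows by the factor $1.05$ forever. Thus the ``equivalence'' you lean on certifies unstable systems and cannot close the proof. What is true, and what your derivation actually needs, is the other form: if $\mathcal A_i \mathcal D_i(R)\mathcal A_i^\top < R_i$ for all $i$ with $R_i>0$, then the closed loop is mean square stable. This follows because the second-moment matrices $X_i(k)=E[x(k)x(k)^\top\onev(r(k)=i)]$ evolve as $X(k+1)=(\mathcal D\circ\mathcal T)(X(k))$, where $\mathcal T(X)_i=\mathcal A_iX_i\mathcal A_i^\top$ and $\mathcal D(Y)_i=\sum_j p_{ji}Y_j$; your derived inequality says $(\mathcal T\circ\mathcal D)(R)<R$ for a positive definite $R$, hence $r_\sigma(\mathcal T\circ\mathcal D)<1$, and $r_\sigma(\mathcal D\circ\mathcal T)=r_\sigma(\mathcal T\circ\mathcal D)$ since $ST$ and $TS$ have the same nonzero spectrum. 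With that replacement lemma (which is the form actually used in \cite{DoVal2002}), your proof closes; as written, it rests on a false equivalence.
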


\section[Random Observation Processes
Induced by Markov Chains]{{Random Observation Processes
Induced\\by Markov Chains}}\label{sec:partObsv}

The aim of this section is to state the stabilization problem with
time-randomly observed Markov states. We in particular introduce a
novel class of random observation processes induced by Markov chains.
{In particular the class contains observation processes that are
not renewal processes.} Let us begin with the next general definition.

\begin{definition}
An $\mathbb{N}$-valued increasing stochastic process $t =
\{t_i\}_{i=0}^\infty$ is called an {\it observation process}.
\end{definition}

Observation processes will be used to model the times at which a
controller can access the Markov state. Given an observation process
$t$, define the stochastic process~\mbox{$\tau =
\{\tau(k)\}_{k=0}^\infty$} by
\begin{equation}
\tau(k) = 
\begin{cases}
\max\{ t_i: t_i \leq k\} & k\geq \max(0, t_0)
\\
\tau_0 &\text{otherwise}
\end{cases}
\end{equation}
where $\tau_0 <0$ is an arbitrary integer. This $\tau(k)$ represents,
for each time~$k$, the most recent time the Markov state was observed.
In particular we have $\tau(t_i) = t_i$ for every $i\geq 0$. Notice
that we augment the process with the arbitrary negative integer
$\tau_0$ when $k<\max(0, t_0)$ because, before the time $k = t_0$,
{no observation is
performed yet}.

We then define another stochastic process~$\sigma =
\{\sigma(k)\}_{k=0}^\infty$ taking its values in $\oneto N$ by
\begin{equation}
\sigma(k) = 
\begin{cases}
r(\tau(k)) & k\geq \max(0, t_0)
\\
\sigma_0 &\text{otherwise}
\end{cases}
\end{equation}
where $\sigma_0 \in \oneto N$ is arbitrary. 
\begin{figure}[tb]
\vspace{.1in}
\centering \includegraphics[width=6.01cm]{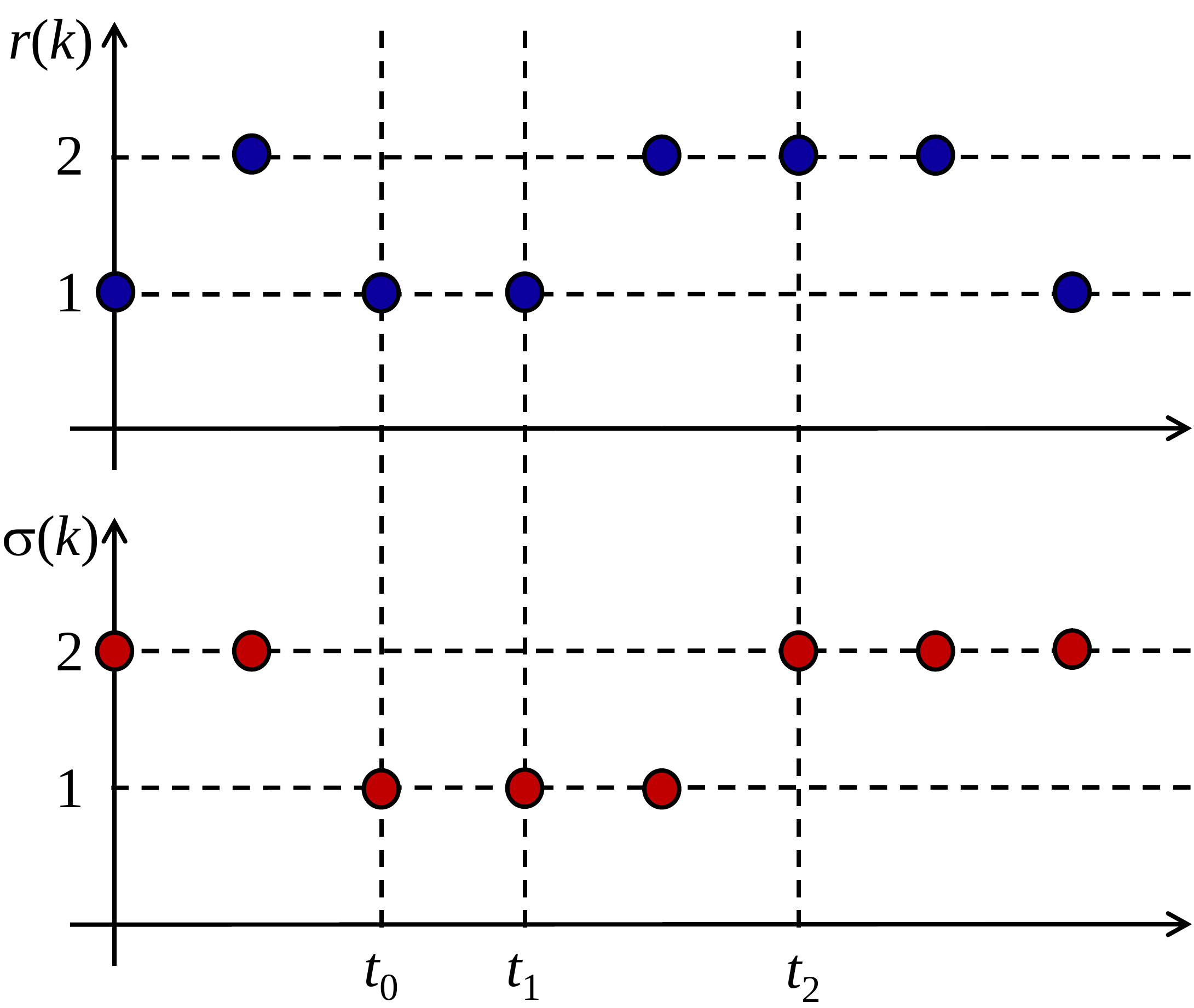}
\caption{Markov state $r$ and its observed version~$\sigma$. Until the
first observation time~$t_0$, the observed version is temporarily set
to $\sigma_0 = 2$.} \label{fig:r_and_signa}
\vspace{-3mm}
\end{figure}
The process $\sigma$ represents the most-updated information of the
Markov state that is available for a controller. We again notice that,
by the same reason as above, the process $\sigma$ is
{augmented} by an arbitrary $\sigma_0$ before the
time $k=\max(0, t_0)$, i.e., before the first observation is
performed. See Fig.~\ref{fig:r_and_signa} for an illustration.

In this paper we assume that the controller has an access to, at each
time $k\geq 0$, the state variable~$x(k)$, the most recent
observation~$\sigma(k)$ of the Markov state~$r$, and $k-\tau(k)$,
which is the time elapsed since the last observation. Then we
construct the state-feedback controller of the form
\begin{equation}\label{eq:state-fb}
u(k) = K_{\sigma(k), \lfloor k+1-\tau(k) \rfloor_T} x(k){,}
\end{equation}
where $K_{\gamma, \delta} \in \mathbb{R}^{m\times n}$ for each $\gamma
\in \oneto N$ and $\delta \in \oneto T$. The first argument
$\sigma(k)$ {in \eqref{eq:state-fb}} allows the gain to be reset
whenever a controller performs an observation of the Markov state as
in~\cite{Cetinkaya2013,Cetinkaya2013a,Cetinkaya2014}. The second
argument allows the controller to change feedback gains between two
consecutive observations rather than keeping them to be constant,
which can enhance the performance of the
controller~\cite{Cetinkaya2014}. The reason for taking the operator
$\lfloor \cdot \rfloor_T$ in the second argument of $K$ is that,
otherwise, we have to design infinitely many matrices $K_{\gamma,
\delta}$ where $\delta$ could be any nonnegative numbers. Taking the
operator $\lfloor \cdot \rfloor_T$ forces $\delta$ to be in the finite
set $\oneto T$, which turns out to make our stabilization problem
solvable in finite time.

Combining \eqref{eq:MJSL_input} and \eqref{eq:state-fb} we obtain the
closed loop equation
\begin{equation}\label{eq:Sigma_K}
\Sigma_{K} 
:
x(k+1) 
= 
\left(A_{r(k)} + B_{r(k)} K_{\sigma(k), \lfloor k+1-\tau(k) \rfloor_T}\right) x(k). 
\end{equation}
Extending Definition~\ref{defn:mss:usual}, we define the mean square
stability of the system $\Sigma_{K}$ as follows.

\begin{definition}\label{defn:}
Let $\mathcal T$ be a set of observation processes. We say that the
pair $(\Sigma_{K}, \mathcal T)$ is {\it mean square stable} if there
exist $C>0$ and $\epsilon\in [0, 1)$ such that the solution~$x$ of
\eqref{eq:Sigma_K} satisfies \eqref{eq:MSS} for all $x_0\in
\mathbb{R}^n$, $r_0\in \oneto N$, $\tau_0<0$, $\sigma_0\in \oneto N$,
and~\mbox{$t\in \mathcal T$}. The feedback control \eqref{eq:state-fb}
is said to stabilize $(\Sigma, \mathcal T)$ if $(\Sigma_{K}, \mathcal
T)$ is mean square stable.
\end{definition}

\subsection{Observation Process Induced by Markov Chains}
 
In this paper we deal with a class of observation processes induced by
time-homogeneous Markov chains. In order to introduce the class, we
first need to define observation processes induced by deterministic
sequences. Let $s\colon \mathbb{N} \to \oneto M$ be an arbitrary
sequence and let $\Lambda$ be a subset of $\oneto M$. Assume that $s$
intersects with $M$ infinitely many times, namely, that the set
$\{k\in\mathbb{N}: s(k)\in \Lambda\}$ is infinite. Then define the
infinite sequence~$t_{\Lambda}(s)$ as the one obtained by increasingly
ordering the numbers in the infinite set~$\{k\in\mathbb{N}: s(k)\in
\Lambda\}$. Thus, the sequence $t_{\Lambda}(s)$ consists of the times
$k$ at which the sequence $s$ intersects with $\Lambda$. {For example,
the observation time instants~$t_0$, $t_1$, and $t_2$ shown in
Fig.~\ref{fig:r_and_signa} are induced by the sequence~$s$ shown in
Fig.~\ref{fig:s_and_t} with $\Lambda = \{2\}$.}
\begin{figure}[tb]
\vspace{.1in}
\centering \includegraphics[width=6.01cm]{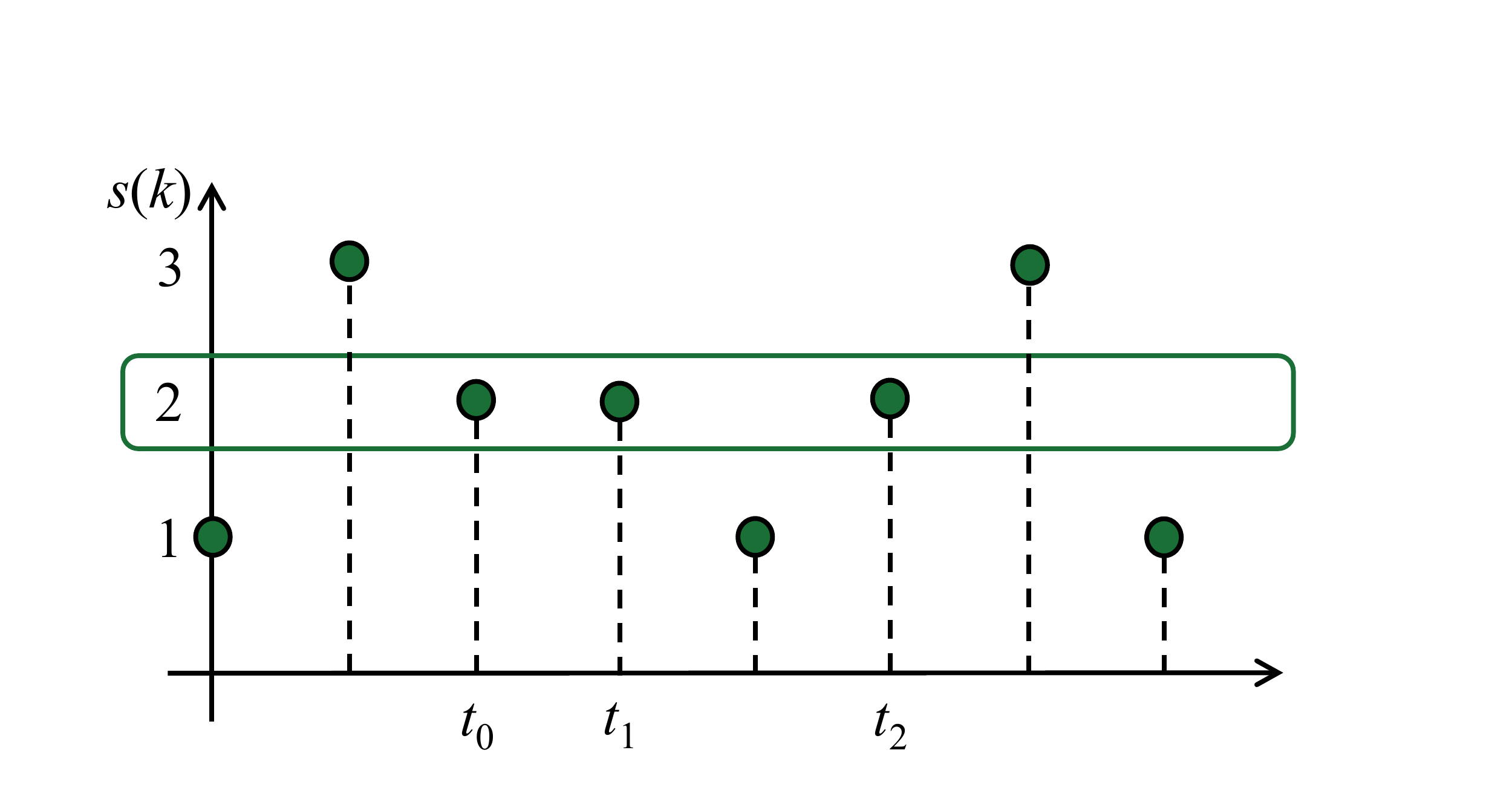} \caption{A
sequence $s$ and the induced observation time instants $t$. An
observation occurs whenever $s$ enters the set $\Lambda = \{2\}$.}
\label{fig:s_and_t}
\vspace{-3mm}
\end{figure}

Then we {extend} the above definition to Markov chains as follows. Let
$s$ be a time-homogeneous Markov chain taking its values in $\oneto M$
and let {$\Lambda\subset \oneto M$} be a nonempty {set that is
recurrent with respect to the Markov chain~$s$}. We define a family of
observation processes $\mathcal T_{s,\Lambda}$ by
\begin{equation}\label{eq:def:T_s}
\mathcal T_{s, \Lambda} = \{ t_\Lambda(s(\cdot;s_0)) \}_{s_0=1}^M, 
\end{equation}
where $s(\cdot;s_0)$ denotes the Markov chain~$s$ when its initial
state equals $s_0$. {Notice that, since $\Lambda$ is recurrent,
$s$ intersects with $\Lambda$ infinitely many times with probability
one and thereby $\mathcal T_{s, \Lambda}$ is well defined.} The
following examples illustrate that the family $\mathcal T_{s,
\Lambda}$ can express various types of observation processes.

\begin{example}[Periodic observation with failures]\label{ex:per:fail}
Let $\tau$ be a positive integer. Let $\Lambda = \{1\}$ and the
transition probability matrix of $s$ be
\begin{equation}
\left[\begin{array}{cc:ccc}
&&1\\\hdashline
&&1\\
&&&\ddots\\
&&&&1\\ \hdashline
p&1-p
\end{array}
\right]\in \mathbb{R}^{(\tau+1)\times (\tau+1)}, 
\end{equation}
where zero entries are omitted. Then we can see that, if $t\in
\mathcal T_{s, \Lambda}$, then the difference $t_{i+1} - t_i$ of
observation times independently follow the distribution $\mu$ on
$\mathbb{N}$ that is concentrated on the set $\{\tau, 2\tau, 3\tau,
\dotsc \}$ and satisfies \mbox{$\mu(\{\tau k\}) = (1-p)^{k-1}p$} for
every $k\geq 1$. In other words, this observation process expresses
the observation of every $\tau$ time units with the probability of
failure $1-p$ at each observation. In particular, if $p=1$, then the
observation process gives the observation with period $\tau$, which is
considered in~\cite{Cetinkaya2014}.
\end{example}

\begin{example}[Renewal processes]
Let $\mu$ be an arbitrary distribution on $\mathbb{N}\backslash \{0\}$
having finite support. Then there exist $\tau>0$ and
$\{p_k\}_{k=1}^\tau$ such that $\sum_{k=1}^\tau p_k=1$ and $\mu(\{ k
\}) = p_k$ for every $1\leq k\leq \tau$. Define the positive
integers~$\tilde p_1$, $\dotsc$, $\tilde p_\tau$ recursively by
{$\tilde p_k = {p_k}/{\prod_{\ell=1}^{k-1} (1-\tilde p_\ell)}$ for
$k=1, \dotsc, \tau$.} Let $s$ be the Markov chain having the
transition probability matrix
\begin{equation}
\left[
\begin{array}{c:ccc}
\tilde p_1&1-\tilde p_1
\\
\vdots &&\ddots
\\
\tilde p_{\tau-1}&&&1-\tilde p_{\tau-1}
\\ \hdashline
1
\end{array}
\right] \in \mathbb{R}^{\tau \times \tau }, 
\end{equation}
where zero entries are omitted again. {Also let $\Lambda =
\{1\}$.} We can see that, if $t\in \mathcal T_{s, \Lambda}$, then
difference $t_{i+1} - t_i$ of observation times independently follow
the distribution $\mu$ {and therefore $t$ forms a renewal
process}.
\end{example}

{Finally we present a simple example of observation
processes~$\mathcal T_{s, \Lambda}$ that are not renewal processes and
therefore cannot be treated by the method in~\cite{Cetinkaya2013}.}

\begin{example}
{Let the transition probability matrix of $s$ be
\begin{equation}
\begin{bmatrix}
0 & 1 & 0
\\
0 & 0 & 1
\\
1 & 0 & 0
\end{bmatrix}
\end{equation}
and also let $\Lambda = \{1, 3\}$. If $s_0 = 1$, then we have
$\{t_i\}_{i=0}^\infty = \{0, 2, 3, 5, 6, 8, 9, \dotsc\}$, which cannot
be a renewal process because the differences of two consecutive
observation times equal $\{2, 1, 2, 1, 2, \dotsc\}$ almost surely.}
\end{example}

Now we state the main problem studied in this paper.

\begin{problem}\label{prb:}
Given a Markov jump linear system~$\Sigma$ and a family of observation
processes $\mathcal T_{s, \Lambda}$ induced by a time-homogeneous
Markov chain~$s$, find feedback gains \mbox{$K = \{K_{\gamma,
\delta}\}_{\gamma \in \oneto N, \delta \in \oneto T}$} that stabilize
the pair~$(\Sigma, \mathcal T_{s, \Lambda})$.
\end{problem}

\section{Embedding of Markov States}\label{sec:hidden}

The difficulty of solving Problem~\ref{prb:} is that the
system~$\Sigma_K$ in \eqref{eq:Sigma_K} is no longer a standard Markov
jump linear system and thus the techniques established in the
literature~\cite{Costa2005} cannot be used to it. Also, by the generality of
the observation processes~$\mathcal T_{s,\Lambda}$ {discussed in
the previous section}, we also cannot use the results recently
proposed in~\cite{Cetinkaya2013,Cetinkaya2013a,Cetinkaya2014}. The aim
of this section is to show that we can embed the Markov state
process~$r$ to another Markov chain, with which we can express
$\Sigma_K$ as a {standard} Markov jump linear system. This fact
will be used in the next section to reduce Problem~\ref{prb:} to the
stabilization of a Markov jump linear system with clustered
observations.

Let $r$ be the Markov state of the Markov jump linear system~$\Sigma$.
Let $s$ be the time-homogeneous Markov chain that induces the family
$\mathcal T_{s, \Lambda}$ of observation processes. We let $P =
[p_{ij}]_{i,j} \in \mathbb{R}^{N\times N}$ and $Q = [q_{ij}]_{i,j}\in
\mathbb{R}^{M\times M}$ denote the transition probability matrices of
$r$ and $s$, respectively. The next proposition is the main result of
this section.

\begin{proposition}\label{prop:Markov}
Define 
\begin{equation}
\bar{\mathfrak X} = \oneto N \times \oneto M \times \oneto N \times \oneto T. 
\end{equation}
Then the $\bar{\mathfrak X}$-valued stochastic process $\bar r$ defined by
\begin{equation}\label{eq:def:bar r}
\bar r(k) = (r(k), s(k), \sigma(k), \lfloor k+1-\tau(k) \rfloor_T),\ k\geq 0
\end{equation}
is a time-homogeneous Markov chain. Moreover its transition
probabilities are given by, for all $\chi = (\alpha, \beta, \gamma,
\delta)$ and {$\chi' = (\alpha', \beta', \gamma\,', \delta')$} in
$\bar{\mathfrak X}$,
\begin{equation}\label{eq:TransProb.bar-r}
\begin{multlined}
\Pr(\bar r(k+1) = \chi' \mid \bar r(k) = \chi) 
\\
= 
\begin{cases}
\onev(\alpha' = \gamma\,',\,\delta=1) p_{\alpha, \alpha'} q_{\beta,\beta'}
&
\beta'\in \Lambda, 
\\
\onev(\gamma\,' = \gamma,\,\delta' = \lfloor \delta + 1\rfloor_T) p_{\alpha, \alpha'} q_{\beta,\beta'}
&
\beta'\notin \Lambda.
\end{cases}
\end{multlined}
\end{equation}
\afterequation
\end{proposition}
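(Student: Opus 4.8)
The plan is to exhibit a deterministic recursion for the augmented state and then invoke the Markov property of the driving pair $(r,s)$. First I would record the update rule for the last two coordinates of $\bar r$. Writing $\delta(k) = \lfloor k+1-\tau(k)\rfloor_T$, an observation occurs at time $k+1$ precisely when $s(k+1)\in\Lambda$, in which case $\tau(k+1)=k+1$, so that $\sigma(k+1)=r(k+1)$ and $\delta(k+1)=\lfloor 1\rfloor_T = 1$; otherwise $\tau(k+1)=\tau(k)$, so that $\sigma(k+1)=\sigma(k)$ and $\delta(k+1)=\lfloor (k+1-\tau(k))+1\rfloor_T$. The only arithmetic point is the identity $\lfloor (k+1-\tau(k))+1\rfloor_T = \lfloor \delta(k)+1\rfloor_T$, which holds because $\delta(k)\equiv k+1-\tau(k) \pmod T$ and both sides lie in $\oneto T$. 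I would check that this recursion is valid uniformly in $k$, in particular across the first observation and during the initialization phase $k<\max(0,t_0)$, where $\sigma(k)=\sigma_0$ and $\delta(k)=\lfloor k+1-\tau_0\rfloor_T$. The upshot is a single deterministic map $F$, independent of $k$, with
\[
\bar r(k+1) = F\bigl(\bar r(k),\, r(k+1),\, s(k+1)\bigr).
\]

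Next I would use that $(r,s)$ is itself a time-homogeneous Markov chain on $\oneto N\times\oneto M$: since $r$ and $s$ are independent and each Markov---which is what yields the product form $p_{\alpha,\alpha'}q_{\beta,\beta'}$---the pair has transition kernel $p_{\alpha,\alpha'}q_{\beta,\beta'}$. Then comes the core conditioning step. Because each $\sigma(j)$ and $\delta(j)$ for $j\le k$ is, by the recursion, a deterministic function of $(r,s)$ up to time $k$ together with the constants $\tau_0,\sigma_0$, the natural filtration of $\bar r$ satisfies $\mathcal F^{\bar r}_k\subseteq \mathcal F^{(r,s)}_k$. Conditioning the Markov property of $(r,s)$ on the smaller $\sigma$-algebra $\mathcal F^{\bar r}_k$ via the tower property, and using that $r(k),s(k)$ are the first two coordinates of $\bar r(k)$ and hence $\mathcal F^{\bar r}_k$-measurable, gives
\[
\Pr\bigl(r(k+1)=\alpha',\,s(k+1)=\beta' \mid \mathcal F^{\bar r}_k\bigr) = p_{\alpha,\alpha'}q_{\beta,\beta'},
\]
with $(\alpha,\beta)=(r(k),s(k))$.

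Combining the two ingredients finishes the proof: since $\bar r(k+1)=F(\bar r(k), r(k+1), s(k+1))$ and $\bar r(k)$ is $\mathcal F^{\bar r}_k$-measurable,
\[
\Pr\bigl(\bar r(k+1)=\chi' \mid \mathcal F^{\bar r}_k\bigr) = \sum_{\alpha',\beta'} \onev\bigl(F(\chi,\alpha',\beta')=\chi'\bigr)\, p_{\alpha,\alpha'}q_{\beta,\beta'},
\]
which depends on the past only through $\chi=\bar r(k)$ and not on $k$; this is precisely the Markov property together with time-homogeneity. Reading off $F$ from the case analysis of the first paragraph then yields the two-case formula in the statement: when $\beta'\in\Lambda$ the new third and fourth coordinates are forced to $\alpha'$ and $1$, and when $\beta'\notin\Lambda$ they are forced to $\gamma$ and $\lfloor\delta+1\rfloor_T$.

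I expect the main obstacle to be this conditioning step, namely justifying that conditioning on the full history of $\bar r$ collapses to conditioning on $(r(k),s(k))$; the filtration inclusion $\mathcal F^{\bar r}_k\subseteq\mathcal F^{(r,s)}_k$ together with the tower property is what makes this rigorous, and it is the place where the augmentation by $(\sigma,\delta)$ is genuinely used, since $(\sigma,\delta)$ is not a function of the current pair $(r(k),s(k))$ alone. The remaining ingredients---the modular identity for $\lfloor\cdot\rfloor_T$ and the bookkeeping across the first observation---are routine but must be verified uniformly in $k$ to secure time-homogeneity.
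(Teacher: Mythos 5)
Your proof is correct and follows essentially the same route as the paper's: the same case split on whether $s(k+1)\in\Lambda$, the same deterministic update of $(\sigma(k),\lfloor k+1-\tau(k)\rfloor_T)$, and the same key fact that the transition of the independent pair $(r,s)$ survives conditioning on the $\bar r$-history because that history is measurable with respect to the $(r,s)$-history. The paper implements this conditioning with explicit cylinder events $\{\bar r(k_0)=\chi_{k_0},\dotsc,\bar r(k)=\chi_k\}$ and probability ratios, whereas you package it as the filtration inclusion $\mathcal F^{\bar r}_k\subseteq\mathcal F^{(r,s)}_k$ plus the tower property; this is a cleaner formalization of the identical idea, not a different argument.
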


\begin{proof}
Let $k_0\in \mathbb{N}$ and $k\geq k_0$ be arbitrary. Take arbitrary
$\chi_i = (\alpha_i, \beta_i, \gamma_i, \delta_i)\in \bar{\mathfrak
X}$ ($i=k_0, \dotsc, k+1$). For each $i$ define the events $\mathcal
E_i,\,\mathcal F_i$ by $\mathcal E_i
=
\{\bar r(i) = \chi_i,\,\dotsc,\,\bar r(k_0) = \chi_{k_0}\}$ and
\begin{equation}
\begin{aligned}
\mathcal F_i 
&= 
\{ \bar r(i) = \chi_i \}
\\
&=
\{ r(i) =\alpha_i,\,s(i) =\beta_i,\,\sigma(i) = \gamma_i,\,\lfloor i+1-\tau(i)\rfloor_T = \delta_i \}.
\end{aligned}
\end{equation}
Under the assumption that $\mathcal E_k$ is not the null set, we need to
evaluate the conditional probability
\begin{equation}\label{eq:P(r(k+1)=...}
\begin{multlined}
\Pr(\bar r(k+1) = \chi_{k+1} \mid \mathcal E_k)
=
{{{{\Pr(\mathcal E_{k+1})}}/{{\Pr(\mathcal E_k)}}}}. 
\end{multlined}
\end{equation}
Remark that this assumption implies that
{\begin{equation}\label{eq:sigma(k)=gamma_k:merged}
\sigma(k) = \gamma_{\,k}
,\ 
\lfloor k+1-\tau(k) \rfloor_T = \delta_k, 
\end{equation}}
because otherwise $\mathcal E_k$ equals a null set.

First assume that $\beta_{k+1} \in \Lambda$. Then we have \mbox{$s(k+1)\in
\Lambda$} so that, by the definition of $\mathcal T_{s, \Lambda}$, an
observation occurs at time~$k+1$, i.e., we have $\tau(k+1) = k+1$ and
\mbox{$\sigma(k+1) = r(k+1)$}. This implies that
\begin{equation}
\begin{multlined}
\mathcal F_{k+1}
=
\{ r(k+1) = \alpha_{k+1}, s(k+1) = \beta_{k+1}, 
\\ 
\alpha_{k+1} = \gamma_{\,k+1}, [1]_T = \delta_{k+1}\}. 
\end{multlined}
\end{equation}
Therefore{, since $\mathcal E_{k+1} = \mathcal E_k \cap \mathcal
F_{k+1}$, }
{\begin{equation}\label{eq:calEk+1:first}
\begin{multlined}
\mathcal E_{k+1}
=
\{\alpha_{k+1} = \gamma_{\,k+1}, \delta_{k+1} = 1\} \\\hspace{1cm}
\cap \{r(k+1) = \alpha_{k+1}, s(k+1) = \beta_{k+1}\} \cap \mathcal E_k
\end{multlined}
\end{equation}}%
and hence 
\begin{equation}\label{eq:for12.1}
\begin{aligned}
\hspace*{-.5cm}\Pr(\mathcal E_{k+1})
=
\onev(\alpha_{k+1} = \gamma_{\,k+1}, \delta_{k+1} = 1)\hspace{2.3cm} 
\\
 \Pr(\{r(k+1) = \alpha_{k+1}, s(k+1) = \beta_{k+1}\} \cap \mathcal E_k).\hspace*{-.5cm}
\end{aligned}
\end{equation}
The probability appearing in the last term of this equation can be computed as
\begin{equation}\label{eq:for12.2}
\begin{aligned}
&\,
\Pr(\{r(k+1) = \alpha_{k+1}, s(k+1) = \beta_{k+1}\} \cap \mathcal E_k)
\\
=&\,
{\Pr(\mathcal E_k)} \Pr(r(k+1) = \alpha_{k+1}, s(k+1) = \beta_{k+1} \mid \mathcal E_k) 
\\
=&\,
{\Pr(\mathcal E_k)} \Pr(r(k+1) = \alpha_{k+1}, s(k+1) = \beta_{k+1} 
\\
&\hspace{3.5cm}\mid r(k)=\alpha_k, s(k) = \beta_k)
\\
=&\,
{\Pr(\mathcal E_k)}  {p_{\alpha_k,\alpha_{k+1}} q_{\beta_k,\beta_{k+1}}}, 
\end{aligned}
\end{equation}
where we used the fact that both $r$ and $s$ are time-homogeneous
Markov chains. Thus equations \eqref{eq:P(r(k+1)=...},
\eqref{eq:for12.1}, and \eqref{eq:for12.2} conclude that, {for the
case of $\beta_{k+1}\in \Lambda$,}
\begin{equation}\label{eq:hidden1}
\begin{multlined}
\Pr(\bar r(k+1) = \chi_{k+1} \mid \mathcal E_k) 
\\
=
\onev(\alpha_{k+1} = \gamma_{\,k+1}, \delta_{k+1} = 1) p_{\alpha_k,\alpha_{k+1}} q_{\beta_k, \beta_{k+1}}.
\end{multlined}
\end{equation}

{Then consider the case where $\beta_{k+1}\notin \Lambda$.} {In this
case, Markov state $r$ is not observed at time $k+1$} so that we have
$\tau(k+1) = \tau(k)$ and $\sigma(k+1) = \sigma(k)$. Therefore, using
equations {\eqref{eq:sigma(k)=gamma_k:merged}}, in the same way as we
derived \eqref{eq:calEk+1:first} we can show that
\begin{equation}\label{eq:calEk+1:second}
\begin{aligned}
\mathcal E_{k+1}
&=
\{\gamma_{\,k+1} = \gamma_{\,k}, \delta_{k+1} = \lfloor \delta_k+1 \rfloor_T\} 
\\&\hspace{1cm} \cap \{r(k+1) = \alpha_{k+1}, s(k+1) = \beta_{k+1}\} \cap \mathcal E_k
\end{aligned}
\end{equation}
and hence 
\begin{equation}
\begin{aligned}
\Pr(\mathcal E_{k+1})
&=
\onev(\gamma_{\,k+1} = \gamma_{\,k}, \delta_{k+1} = \lfloor \delta_k+1 \rfloor_T) 
\\&\hspace{.9cm} \Pr(\{r(k+1) = \alpha_{k+1}, s(k+1) = \beta_{k+1}\} \cap \mathcal E_k).
\end{aligned}
\end{equation}
Therefore, equations \eqref{eq:P(r(k+1)=...}, \eqref{eq:for12.2}, and
\eqref{eq:calEk+1:second} show that,{for $\beta_{k+1}\notin\Lambda$,}
\begin{equation}\label{eq:hidden2}
\begin{multlined}
\Pr(\bar r(k+1) = \chi_{k+1} \mid \mathcal E_k) 
\\
=
\onev(\gamma_{\,k} = \gamma_{\,k+1}, \delta_{k+1} = \lfloor \delta_k + 1\rfloor_T) 
p_{\alpha_k,\alpha_{k+1}} q_{\beta_k, \beta_{k+1}}.
\end{multlined}
\end{equation}

Since the probabilities \eqref{eq:hidden1} and \eqref{eq:hidden2} do
not depend on $k_0$, letting $k_0 = k$ and $k_0 = 0$ in 
\eqref{eq:hidden1} and \eqref{eq:hidden2} we obtain
\begin{equation}
\begin{multlined}
\Pr(\bar r(k+1) = \chi_{k+1} \mid \bar r(k) = \chi_k, \dotsc,  \bar r(0) = \chi_0)
\\
=
\Pr(\bar r(k+1) = \chi_{k+1} \mid \bar r(k) = \chi_k)
\end{multlined}
\end{equation}
for every $k\geq 0$. This shows that $\bar r$ is a Markov chain since
$\chi_0, \dotsc, \chi_{k+1} \in \bar{\mathfrak X}$ were arbitrarily
taken. Moreover, since the probabilities \eqref{eq:hidden1} and
\eqref{eq:hidden2} do not depend on $k$, we conclude that the Markov
chain $\bar r$ is time-homogeneous and its transition probabilities
are actually given by \eqref{eq:TransProb.bar-r}.
\end{proof}

\begin{remark}
It is observed in \cite{Cetinkaya2014} that the process $\{r(k),
\sigma(k)\}_{k \geq 0}$ itself is indeed a Markov chain but not
time-homogeneous. Proposition~\ref{prop:Markov} shows that, however,
augmenting the third and fourth components in \eqref{eq:def:bar r}
enables us to construct a time-homogeneous Markov chain, which plays a
crucial role in the next section.
\end{remark}

\section{Designing Stabilizing Feedback Gains\\via Linear Matrix Inequalities}\label{sec:design}

In this section we show a set of feedback gains $K$ that stabilizes
$(\Sigma, \mathcal T_{s,\Lambda})$ can be found by solving a set of
linear matrix inequalities. For the proof we will use the reduction of
$\Sigma_K$, which is not necessarily a Markov jump linear system, to a
Markov jump linear system with Markov state evolving in the same way
as the Markov chain~$\bar r$ presented in the last section.

We define $\theta$ as the time-homogeneous Markov chain taking its
values in $\bar{\mathfrak X}$ and having the same transition
probability as $\bar r$, i.e., we assume that the transition
probability $\bar p_{\chi, \chi'}$ of $\theta$ is given
by
\begin{equation}\label{eq:TransProb.theta}
\begin{multlined}
\bar p_{\chi, \chi'}= 
\begin{cases}
\onev(\alpha' = \gamma\,',\,\delta=1) p_{\alpha, \alpha'} q_{\beta,\beta'}
&
\beta'\in \Lambda
\\
\onev(\gamma\,' = \gamma,\,\delta' = \lfloor \delta + 1\rfloor_T) p_{\alpha, \alpha'} q_{\beta,\beta'}
&
\beta'\notin \Lambda
\end{cases}
\end{multlined}
\end{equation}
where $\chi' = (\alpha', \beta', \gamma\,', \delta')$ and $\chi =
(\alpha, \beta, \gamma, \delta)$. The following lemma will be used to
prove the main result of this paper.

\begin{lemma}\label{lem:equvchains}
Let $r_0\in \oneto N$, $s_0\in \oneto M$, $\tau_0<0$, and $\sigma_0\in \oneto
N$ be arbitrary. Then there exists $\chi_0 \in \bar{\mathfrak X}$ such that 
\begin{equation}\label{eq:theta=bar-r}
\theta(k; \chi_{{0}})
=
\bar r (k;r_0, s_0, \sigma_0, \lfloor 1-\tau_0\rfloor_T), 
\end{equation}
where the arguments following the time $k$ denote the initial
conditions of $\theta$ and $\bar r$.
\end{lemma}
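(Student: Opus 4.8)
The plan is to reduce the lemma to the elementary fact that a time-homogeneous Markov chain is determined in law by its transition matrix together with its initial distribution. By Proposition~\ref{prop:Markov}, $\bar r$ is itself a time-homogeneous Markov chain on $\bar{\mathfrak X}$, and its transition probabilities \eqref{eq:TransProb.bar-r} are by construction identical to those of $\theta$ given in \eqref{eq:TransProb.theta}. Hence the two processes can differ only through their initial laws, so it suffices to exhibit an initial state $\chi_0$ for which $\theta$ and $\bar r$ share the same initial distribution; equality of the full laws (that is, of all finite-dimensional distributions, which is the meaning of the identity \eqref{eq:theta=bar-r}) then follows automatically.

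First I would note that, since the data $r_0$, $s_0$, $\tau_0$, $\sigma_0$ are fixed constants, the initial value $\bar r(0)$ is deterministic: it is a single point of $\bar{\mathfrak X}$ attained with probability one. I would simply set $\chi_0 := \bar r(0)$, which makes the existence assertion immediate; the remaining task is only to write this point down explicitly by reading off $\sigma(0)$ and $\tau(0)$ from their definitions.

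The one computation requiring care, and the place where I expect the only genuine bookkeeping, is evaluating $\bar r(0)$, because its third and fourth coordinates depend on whether an observation already occurs at time $k=0$. Since the observation times are the epochs at which $s$ enters $\Lambda$, an observation happens at time $0$ exactly when $s_0\in\Lambda$. If $s_0\notin\Lambda$, then $k=0$ lies in the ``otherwise'' branch of the definitions of $\tau$ and $\sigma$, giving $\tau(0)=\tau_0$ and $\sigma(0)=\sigma_0$, so that $\chi_0=\bar r(0)=(r_0,s_0,\sigma_0,\lfloor 1-\tau_0\rfloor_T)$, which matches the right-hand side of \eqref{eq:theta=bar-r} verbatim. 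If instead $s_0\in\Lambda$, then $\tau(0)=0$ and $\sigma(0)=r(0)=r_0$, so $\chi_0=\bar r(0)=(r_0,s_0,r_0,1)$, the augmentation parameters $\sigma_0$ and $\tau_0$ being overwritten by the time-$0$ observation. Either way $\chi_0$ is a well-defined element of $\bar{\mathfrak X}$.

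Finally, with $\theta$ started at this $\chi_0$, both $\theta(\cdot;\chi_0)$ and $\bar r$ are time-homogeneous Markov chains on the same state space $\bar{\mathfrak X}$, with identical transition probabilities and identical initial distribution, namely the point mass at $\chi_0$. A routine induction on $k$ — writing each joint probability $\Pr(\bar r(0)=\chi_0,\dotsc,\bar r(k)=\chi_k)$ as the product of the initial mass and the successive transition probabilities, and likewise for $\theta$ — shows that all finite-dimensional distributions agree, which is exactly the asserted identity \eqref{eq:theta=bar-r}.
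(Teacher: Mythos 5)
Your proposal is correct. Note that the paper itself states Lemma~\ref{lem:equvchains} \emph{without proof} (it is followed immediately by the construction of $\bar\Sigma$), treating it as an immediate consequence of Proposition~\ref{prop:Markov}; your argument supplies exactly the justification the authors leave implicit. The two substantive points you make are the right ones: first, since $\theta$ is specified only through its transition probabilities \eqref{eq:TransProb.theta}, the identity \eqref{eq:theta=bar-r} can only mean equality of the laws (finite-dimensional distributions), and your reduction to ``same transition matrix $+$ same initial distribution $\Rightarrow$ same law'' is the standard and correct way to establish it; second, the only nontrivial bookkeeping is the evaluation of $\chi_0=\bar r(0)$, and your case split is accurate --- $\chi_0=(r_0,s_0,\sigma_0,\lfloor 1-\tau_0\rfloor_T)$ when $s_0\notin\Lambda$, but $\chi_0=(r_0,s_0,r_0,1)$ when $s_0\in\Lambda$, which also explains why the lemma is phrased as a pure existence statement rather than naming $\chi_0$ explicitly.
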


{Now,} for the Markov jump linear system $\Sigma$, we
introduce another Markov jump linear system having $\theta$ as its
Markov state as follows. We define the matrices~\mbox{$\bar A_\chi \in
\mathbb{R}^{n\times n}$} and \mbox{$\bar B_\chi \in \mathbb{R}^{n\times m}$}
for each $\chi = (\alpha, \beta, \gamma, \delta)\in \bar{\mathfrak X}$
by $\bar A_\chi = A_{\alpha}$ and $\bar B_\chi = B_{\alpha}$. Then
define the Markov jump linear system $\bar \Sigma$ by
\begin{equation}\label{eq:bar:MJLS}
\bar \Sigma 
:
\bar x(k+1) = \bar A_{\theta(k)} \bar x(k) + \bar B_{\theta(k)} \bar u(k), 
\end{equation}
with the initial states $\bar x(0) = \bar x_0 \in \mathbb{R}^n$ and
$\bar \theta(0) = \bar \theta_0 \in \bar{\mathfrak X}$. Let us also
consider the following standard state-feedback controller
\begin{equation}\label{eq:bar_state-fb}
\bar u(k) = \bar K_{\theta(k)} \bar x(k)
\end{equation}
where $\bar K_\chi \in \mathbb{R}^{m\times n}$ for each $\chi \in
\bar{\mathfrak X}$. Then we obtain the closed loop equation
\begin{equation}
\bar \Sigma_{\bar K}
:
\bar x(k+1) = (\bar A_{\theta(k)} +  \bar B_{\theta(k)} \bar K_{\theta(k)}) \bar x(k).
\end{equation}
The next theorem is the first major result of
this paper.

\begin{theorem}
Assume that $\{\bar K_{\chi}\}_{\chi\in \bar{\mathfrak X}}\subset
\mathbb{R}^{m\times n}$ stabilizes $\bar \Sigma$ and satisfies
\begin{equation}\label{eq:barK:constraint}
\bar K_{\alpha, \beta, \gamma, \delta}
=
\bar K_{\alpha', \beta', \gamma, \delta}
\end{equation}
for all $\alpha, \alpha'\in \oneto N$, $\beta, \beta'\in \oneto M$,
$\gamma \in \oneto N$, and $\delta \in \oneto T$. For each $\gamma \in
\oneto N$ and $\delta \in \oneto T$ define $K_{\gamma, \delta}$ by
\begin{equation}\label{eq:def:barK}
K_{\gamma, \delta} = \bar K_{1, 1, \gamma, \delta}. 
\end{equation}
Then $K$ stabilizes $(\Sigma, \mathcal T_{s, \Lambda})$.
\end{theorem}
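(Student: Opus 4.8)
The plan is to deduce the mean square stability of $(\Sigma_K, \mathcal T_{s,\Lambda})$ directly from the assumed stability of $\bar\Sigma_{\bar K}$ by showing that, for suitably matched initial conditions, the solution $x$ of the closed loop $\Sigma_K$ in \eqref{eq:Sigma_K} and the solution $\bar x$ of $\bar\Sigma_{\bar K}$ have the same distribution. Once this distributional identity is in hand, the exponential bound \eqref{eq:MSS} for $\bar x$ transfers verbatim to $x$, and the uniformity of the constants $C$ and $\epsilon$ over the initial Markov state of $\theta$ will supply the uniformity over $x_0$, $r_0$, $\tau_0$, $\sigma_0$, and $t$ required by Definition~\ref{defn:}.

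First I would fix arbitrary initial data $x_0 \in \mathbb{R}^n$, $r_0 \in \oneto N$, $\tau_0 < 0$, $\sigma_0 \in \oneto N$, and $t \in \mathcal T_{s,\Lambda}$; since by \eqref{eq:def:T_s} every element of $\mathcal T_{s,\Lambda}$ is of the form $t_\Lambda(s(\cdot; s_0))$, choosing $t$ amounts to choosing an initial state $s_0 \in \oneto M$ for the chain $s$. With these data the extended process $\bar r$ of \eqref{eq:def:bar r}, which by Proposition~\ref{prop:Markov} is a time-homogeneous Markov chain, is fully determined. I would then invoke Lemma~\ref{lem:equvchains} to produce an initial state $\chi_0 \in \bar{\mathfrak X}$ for which $\theta(\cdot; \chi_0)$ and $\bar r(\cdot; r_0, s_0, \sigma_0, \lfloor 1-\tau_0\rfloor_T)$ agree in law, and initialize $\bar\Sigma_{\bar K}$ with $\bar x_0 = x_0$ and $\bar\theta_0 = \chi_0$.

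The key step is to identify the two closed-loop recursions as the same matrix-valued function of the driving chain. Writing a generic state as $\chi = (\alpha, \beta, \gamma, \delta)$ and using $\bar A_\chi = A_\alpha$, $\bar B_\chi = B_\alpha$ together with the constraint \eqref{eq:barK:constraint} and the definition \eqref{eq:def:barK}, the closed-loop matrix of $\bar\Sigma_{\bar K}$ at $\chi$ equals $A_\alpha + B_\alpha K_{\gamma,\delta}$. On the other hand, when $\bar r(k) = (\alpha, \beta, \gamma, \delta)$ we have $r(k) = \alpha$, $\sigma(k) = \gamma$, and $\lfloor k+1-\tau(k)\rfloor_T = \delta$, so the matrix driving \eqref{eq:Sigma_K} is exactly $A_\alpha + B_\alpha K_{\gamma,\delta}$ as well. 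Thus both $x(k)$ and $\bar x(k)$ are the same deterministic functional of the common initial state $x_0$ and of the path of the driving chain; since $\theta(\cdot; \chi_0)$ and $\bar r(\cdot)$ have identical finite-dimensional distributions, $x(k)$ and $\bar x(k)$ are equal in distribution, and in particular $E[\norm{x(k)}^2] = E[\norm{\bar x(k)}^2]$ for every $k$.

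To finish, I would apply the hypothesis that $\bar K$ stabilizes $\bar\Sigma$: by Definition~\ref{defn:mss:usual} there exist constants $C > 0$ and $\epsilon \in [0,1)$, independent of $\bar x_0$ and $\bar\theta_0$, with $E[\norm{\bar x(k)}^2] < C\epsilon^k \norm{\bar x_0}^2$. Combining this with the distributional identity yields $E[\norm{x(k)}^2] < C\epsilon^k \norm{x_0}^2$ for the arbitrarily chosen initial data, which is precisely \eqref{eq:MSS}. I expect the main obstacle to be justifying the distributional identity rigorously: one must verify that the constraint \eqref{eq:barK:constraint} makes $\bar K_{\theta(k)}$ depend on $\theta(k)$ only through its observable coordinates $(\gamma, \delta)$, so that the feedback genuinely reduces to the admissible form \eqref{eq:state-fb}, and that the edge-case bookkeeping of the fourth coordinate at $k = 0$ (handled by Lemma~\ref{lem:equvchains}) does not spoil the equality in law of the two chains.
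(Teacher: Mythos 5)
Your proposal is correct and takes essentially the same approach as the paper's own proof: invoke Lemma~\ref{lem:equvchains} to identify the extended chain $\bar r$ with $\theta$ for a suitable $\chi_0$, observe that the constraint \eqref{eq:barK:constraint} together with the definition \eqref{eq:def:barK} makes the closed-loop matrix of $\bar\Sigma_{\bar K}$ at $\chi=(\alpha,\beta,\gamma,\delta)$ equal to $A_\alpha + B_\alpha K_{\gamma,\delta}$, which is exactly the matrix driving \eqref{eq:Sigma_K}, and then transfer the mean square bound using the uniformity of $C$ and $\epsilon$ over initial states. The only difference is cosmetic: you phrase the identification of $x$ and $\bar x$ as equality in distribution, while the paper asserts the pathwise equality $\bar x(k) = x(k)$; both readings suffice.
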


\begin{proof}
Assume that $\{\bar K_{\chi}\}_{\chi\in \bar{\mathfrak X}}\subset
\mathbb{R}^{m\times n}$ stabilizes $\bar \Sigma$ and satisfies
\eqref{eq:barK:constraint}. Then there exist $C>0$ and $\epsilon\in
[0, 1)$ such that the solution $\bar x$ of $\bar \Sigma_{\bar K}$
satisfies $E[\norm{\bar x(k)}^2] < C \epsilon^k \norm{\bar x_0}^2$.
Define $K$ by \eqref{eq:def:barK} and let us show that $K$ stabilizes
$(\Sigma, \mathcal T_{s, \Lambda})$. Let $x_0\in \mathbb{R}^n$,
$r_0\in \oneto N$, $s_0\in \oneto M$, $\tau_0<0$, $\sigma_0\in \oneto
N$ be arbitrary. By Lemma~\ref{lem:equvchains}, we can take the
corresponding $\chi_0 \in \bar{\mathfrak X}$ such that
\eqref{eq:theta=bar-r} holds.  Then we can see that
\begin{equation}
\begin{aligned}
x(k+1) 
&= 
\left(
A_{r(k)} + B_{r(k)} K_{\sigma(k), \lfloor k-\tau(k) \rfloor_T}
\right) x(k)
\\
&=
\left(
\bar A_{\theta(k)} + \bar B_{\theta(k)} \bar K_{\theta(k)}
\right) x(k){.}
\end{aligned}
\end{equation}
This shows $\bar x(k) = x(k)$ provided the initial states of $\Sigma$
and $\bar \Sigma$ {coincide} as $x_0 = \bar x_0$.
Therefore, since $\bar \Sigma_{\bar K}$ is mean square stable, we
obtain $E[\norm{x(k)}^2] < C\epsilon^k \norm{x_0}^2$ for every $k$.
Hence $K$ stabilizes $(\Sigma, \mathcal T_{s, \Lambda})$, as desired.
\end{proof}

The constraint \eqref{eq:barK:constraint} leads us to decompose
$\bar{\mathfrak X}$ into the unobservable part~$\bar{\mathfrak X}_h =
\oneto N \times \oneto M$ and the observable
part~\mbox{$\bar{\mathfrak X}_o = \oneto N \times \oneto T$} as
{$\bar{\mathfrak X} = \bar{\mathfrak X}_h \times \bar{\mathfrak
X}_o$.} Then the stabilization of~$\bar \Sigma$ with the feedback
control~\eqref{eq:bar_state-fb} satisfying the
constraint~\eqref{eq:barK:constraint} on feedback gains is equivalent
to the stabilization of $\bar \Sigma$ via clustered
observation~\cite{DoVal2002} reviewed in Section~\ref{sec:MJLS}.
Therefore, using Proposition~\ref{prop:doval} we immediately obtain
the next theorem.

\begin{theorem}\label{thm:LMI}
For $R_\chi \in \mathbb{R}^{n\times n}$ ($\chi \in \bar{\mathfrak X}$)
define\ $\mathcal D_{\chi}(R) = \sum_{\chi'\in\bar{\mathfrak X}} \bar
p_{\chi', \chi} R_{\chi'}$. Assume that $R_\chi \in
\mathbb{R}^{n\times n}$, $G_{\gamma,\delta}\in \mathbb{R}^{n\times
n}$, and $F_{\gamma,\delta}\in \mathbb{R}^{m\times n}$ satisfy the
linear matrix inequality
\begin{equation}\label{eq:lmi}
\begin{bmatrix}
R_\chi 
& 
A_\alpha  G_{\gamma, \delta} + B_{\alpha} F_{\gamma, \delta}
\\
G_{\gamma, \delta}^\top A_\alpha^\top + B_{\alpha}^\top F_{\gamma, \delta}^\top 
& 
G_{\gamma, \delta} + G_{\gamma, \delta}^\top - \mathcal D_\chi(R)
\end{bmatrix} > 0
\end{equation}
for all $ \chi = (\alpha, \beta, \gamma, \delta) \in \mathfrak X$. For
each $\gamma \in \oneto N$ and $\delta \in \oneto T$ define
$K_{\gamma, \delta} = F_{\gamma, \delta}G_{\gamma, \delta}^{-1}$. Then
$K$ stabilizes $(\Sigma, \mathcal T_{s, \Lambda})$.
\end{theorem}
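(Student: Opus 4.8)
The plan is to recognize the linear matrix inequality \eqref{eq:lmi} as the clustered-observation inequality of Proposition~\ref{prop:doval}, instantiated for the auxiliary Markov jump linear system $\bar\Sigma$ of \eqref{eq:bar:MJLS}, and then to transport the resulting gains back to the original pair $(\Sigma, \mathcal T_{s, \Lambda})$ via the preceding theorem. All of the genuinely nontrivial work has already been done: Proposition~\ref{prop:Markov} supplies the time-homogeneous chain $\theta$ with transition probabilities $\bar p$ of \eqref{eq:TransProb.theta}, and the preceding theorem shows that a constrained stabilizing gain for $\bar\Sigma$ induces a stabilizing gain for $(\Sigma, \mathcal T_{s, \Lambda})$. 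What remains is essentially a careful matching of notation.

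First I would instantiate Proposition~\ref{prop:doval} on the state space $\bar{\mathfrak X} = \bar{\mathfrak X}_h \times \bar{\mathfrak X}_o$, with $\bar{\mathfrak X}_h = \oneto{N} \times \oneto{M}$ unobservable and $\bar{\mathfrak X}_o = \oneto{N} \times \oneto{T}$ observable, so that a state $\chi = (\alpha, \beta, \gamma, \delta)$ carries observable label $(\gamma, \delta)$. Under this correspondence the role of $A_i, B_i$ is played by $\bar A_\chi = A_\alpha$ and $\bar B_\chi = B_\alpha$; the cluster-shared matrices $G_j, F_j$ become $G_{\gamma, \delta}, F_{\gamma, \delta}$; and the operator $\mathcal D_i(R) = \sum_j p_{ji} R_j$ becomes exactly $\mathcal D_\chi(R) = \sum_{\chi'} \bar p_{\chi', \chi} R_{\chi'}$. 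I would then verify block-by-block that \eqref{eq:lmi}, quantified over all $\chi \in \bar{\mathfrak X}$, coincides with the clustered inequalities of Proposition~\ref{prop:doval}, the only delicate points being that $G_{\gamma, \delta}$ and $F_{\gamma, \delta}$ are shared across the cluster $\{(\alpha, \beta, \gamma, \delta) : (\alpha, \beta) \in \bar{\mathfrak X}_h\}$ and that the incoming transition probabilities $\bar p_{\chi', \chi}$ enter $\mathcal D_\chi$ in the orientation dictated by Proposition~\ref{prop:doval}.

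Applying Proposition~\ref{prop:doval} then produces gains $\bar K_{\gamma, \delta} = F_{\gamma, \delta} G_{\gamma, \delta}^{-1}$, depending only on the observable label, whose clustered-observation feedback stabilizes $\bar\Sigma$. Viewing these as a full mode-dependent family $\bar K_\chi := \bar K_{\gamma, \delta}$ for $\chi = (\alpha, \beta, \gamma, \delta)$, I would note that $\{\bar K_\chi\}$ stabilizes $\bar\Sigma$ in the sense of \eqref{eq:bar_state-fb} --- the two closed loops agree because the gain is constant along each cluster --- and that it satisfies the constraint \eqref{eq:barK:constraint}, since it does not depend on $(\alpha, \beta)$. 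The preceding theorem then applies, yielding that the gains defined by \eqref{eq:def:barK} stabilize $(\Sigma, \mathcal T_{s, \Lambda})$; since $\bar K_{1, 1, \gamma, \delta} = \bar K_{\gamma, \delta} = F_{\gamma, \delta} G_{\gamma, \delta}^{-1}$, these are precisely the gains $K_{\gamma, \delta}$ named in the statement.

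I anticipate no essential obstacle: the proof is a composition of Proposition~\ref{prop:doval} with the two preceding results, and the substance lies entirely in the bookkeeping of the product decomposition of $\bar{\mathfrak X}$ and the orientation of $\bar p$ in $\mathcal D_\chi$. The single point requiring a moment's care is confirming that the quantifier ``for all $\chi \in \bar{\mathfrak X}$'' in \eqref{eq:lmi} is equivalent to the ``$i$ in the cluster of $j$, over all observable $j$'' quantification of Proposition~\ref{prop:doval}, which holds because each $\chi$ belongs to exactly one cluster.
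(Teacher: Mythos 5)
Your proposal is correct and follows essentially the same route as the paper: the paper also obtains Theorem~\ref{thm:LMI} by decomposing $\bar{\mathfrak X} = \bar{\mathfrak X}_h \times \bar{\mathfrak X}_o$ with $\bar{\mathfrak X}_h = \oneto{N}\times\oneto{M}$ and $\bar{\mathfrak X}_o = \oneto{N}\times\oneto{T}$, observing that stabilization of $\bar\Sigma$ under the constraint \eqref{eq:barK:constraint} is exactly clustered-observation stabilization, and then composing Proposition~\ref{prop:doval} with the preceding theorem to transport the gains $K_{\gamma,\delta} = F_{\gamma,\delta}G_{\gamma,\delta}^{-1}$ back to $(\Sigma, \mathcal T_{s,\Lambda})$. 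Your bookkeeping of the clusters and of the orientation of $\bar p_{\chi',\chi}$ in $\mathcal D_\chi$ is exactly what the paper's ``immediately obtain'' glosses over.
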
 

\begin{example}
Let $N=3$ and consider the Markov jump linear system $\Sigma$ 
given by the matrices
\begin{equation}
\begin{gathered}
A_1 = \begin{bmatrix}
-0.45 &-0.3 \\ 1.2 & 0.45
\end{bmatrix},\ 
A_2 = A_3 = \begin{bmatrix}
-0.7 & 0.7 \\ 0.2& 0.8
\end{bmatrix}{,}
\\
B_1 = \begin{bmatrix}
1 \\ 1
\end{bmatrix},\ 
B_2 = 
\begin{bmatrix}
1 \\ 0
\end{bmatrix},\ 
B_3 = 
\begin{bmatrix}
-1 \\ 0
\end{bmatrix}.
\end{gathered}
\end{equation}
Let the {transition probabilities} of the Markov state~$r$ be {given
by $p_{ii} = 0.6$ for every $i$ and $p_{ij} = 0.2$ for all distinct
$i$ and~$j$.} Assume that a controller tries an observation of the
Markov state every 4 time units, but it fails with probability~$1/2$.
The corresponding Markov chain~$s$ can be realized by letting
\mbox{$\tau =4$} and $p=1/2$ in Example~\ref{ex:per:fail}. {Also we
set $T=4$.} Solving the linear matrix inequalities~\eqref{eq:lmi} we
obtain stabilizing feedback gains. We construct 100 sample paths of
the solution $x$ of the stabilized system~$\Sigma_K$.
Figs.~\ref{fig:graph3} and~\ref{fig:graph3log} show the sample average
and the sample paths of $\norm{x(k)}^2$, respectively. We can see
that, even though the observation of the Markov state is not
{necessarily} performed periodically, the designed controller attains
stabilization.

\begin{figure}[tb]
\vspace{.08in}
\centering 
\includegraphics[width=6cm]{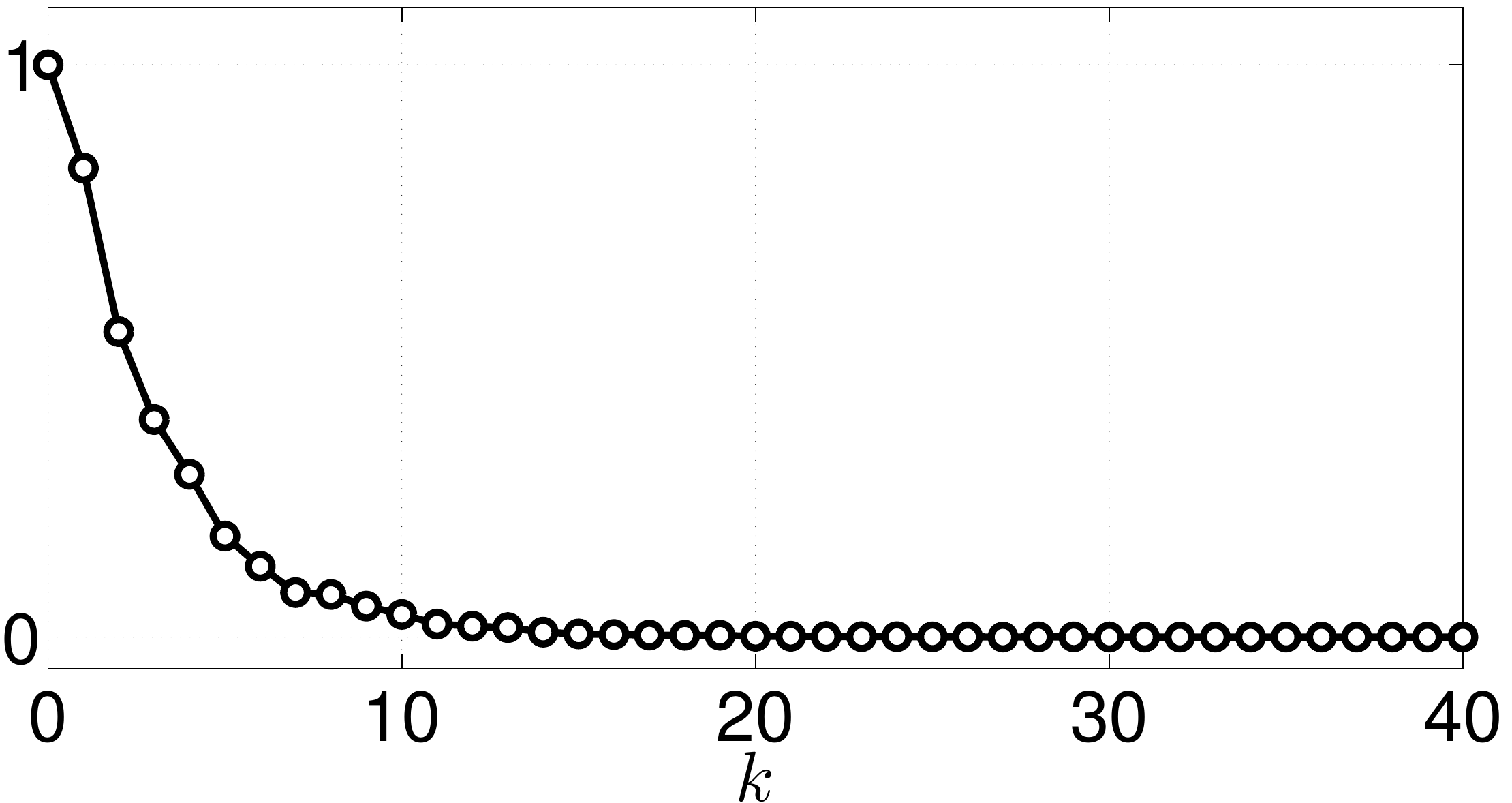}
\caption{Stabilized system:  Sample average of $\norm{x(k)}^2$}
\label{fig:graph3}
\end{figure}

\begin{figure}[tb]
\centering 
{\hspace*{-.47cm}\includegraphics[width=6.5cm]{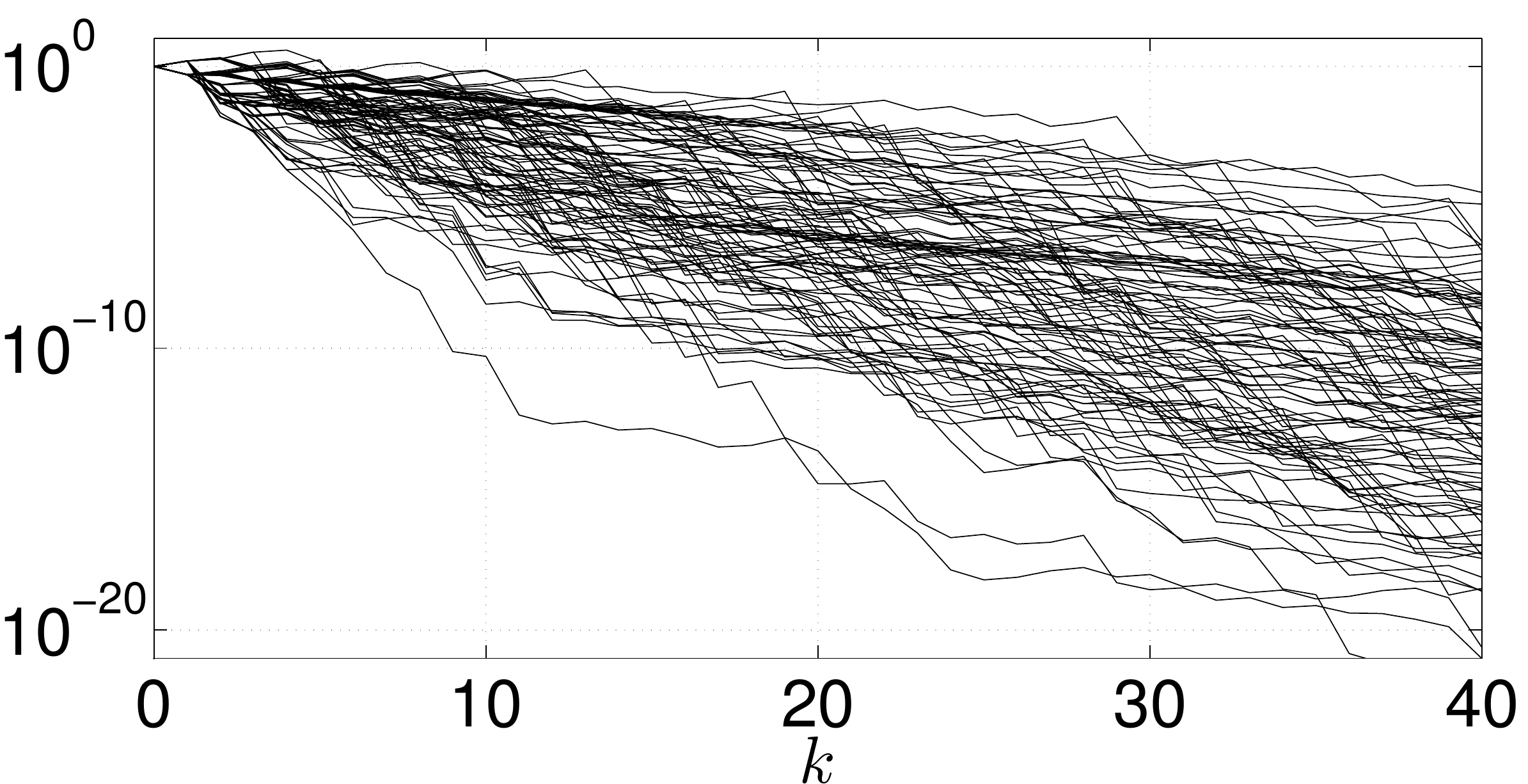}}
\caption{Stabilized system:  100 sample paths of ${\norm{x(k)}^2}$} \label{fig:graph3log}
\end{figure}
\vspace{-3mm}
\end{example}

\subsection[Observation at the initial time]{Observation at the Initial Time}

The controller designed by Theorem~\ref{thm:LMI} is stronger than the
ones in~\cite{Cetinkaya2013,Cetinkaya2013a,Cetinkaya2014} in the
following sense: though the controller by Theorem~\ref{thm:LMI} does
not necessarily need to know the Markov state at the initial time
$k=0$ for stabilization, the ones designed
in~\cite{Cetinkaya2013,Cetinkaya2013a,Cetinkaya2014} {are supposed to}
know the Markov state at $k=0$ {for stabilization}. The aim of this
subsection is to show {that the above two different types of
stabilization are in fact equivalent provided the first observation
time, $t_0$, is uniformly bounded}.

For a family of observation processes $\mathcal T$, we define another
family of observation processes $\mathcal T' \subset \mathcal T$ by
{\mbox{$\mathcal T'= \{t \in \mathcal T:  t_0 = 0\text{ with
probability one}\}$}.} $\mathcal T'$ expresses the set of all
observation processes in $\mathcal T$ that observes the Markov state
at $k=0$. In particular we can see that
\begin{equation}\label{eq:calT_s'=}
\mathcal T_{s, \Lambda}' 
=
\{t_{\Lambda}(s(\cdot;s_0))\}_{s_0\in \Lambda}.
\end{equation}
The difference from
\eqref{eq:def:T_s} is that the initial state~$s_0$ of the Markov
chain~$s$ is confined to be in $\Lambda$.

Then we can prove the following theorem. 

\begin{theorem}\label{thm:weak}
Assume that there exists $\tau > 0$ such that $t_0 \leq \tau$ with
probability one. Then $K$ stabilizes $(\Sigma, \mathcal T_{s,
\Lambda})$ if and only if $K$ stabilizes $(\Sigma, \mathcal T_{s,
\Lambda}')$.
\end{theorem}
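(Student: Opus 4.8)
The plan is to dispose of the easy direction first and then concentrate all effort on the converse. Since $\Lambda \subset \oneto M$, the description~\eqref{eq:calT_s'=} shows $\mathcal T_{s, \Lambda}' \subset \mathcal T_{s, \Lambda}$; hence if $K$ stabilizes $(\Sigma, \mathcal T_{s, \Lambda})$, then the bound~\eqref{eq:MSS} holds in particular for every $t \in \mathcal T_{s, \Lambda}' \subset \mathcal T_{s, \Lambda}$, so $K$ stabilizes $(\Sigma, \mathcal T_{s, \Lambda}')$. For the converse I would fix arbitrary initial data $x_0$, $r_0$, $s_0 \in \oneto M$, $\tau_0 < 0$, $\sigma_0$, let $x$ be the corresponding solution of~\eqref{eq:Sigma_K}, and exploit the fact that the trajectory splits at the first observation time $t_0$, which by hypothesis satisfies $t_0 \le \tau$ almost surely.

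On the pre-observation window $0 \le k \le t_0$ the feedback uses the fixed temporary value $\sigma_0$, so $x(k)$ is a product of at most $\tau$ matrices drawn from the finite collection $\{A_\alpha + B_\alpha K_{\gamma,\delta}\}$. Letting $\bar\rho$ be the largest spectral norm among these finitely many closed-loop matrices, I would obtain the deterministic bound $\norm{x(k)} \le \bar\rho^{\tau}\norm{x_0}$, and hence $E[\norm{x(k)}^2] \le C_1\norm{x_0}^2$ with $C_1 = \bar\rho^{2\tau}$, valid for all $0 \le k \le \tau$. This disposes of the finitely many early times and simultaneously bounds $E[\norm{x(t_0)}^2]$.

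The heart of the argument is the post-observation window. At time $t_0$ an observation occurs, so $\tau(t_0)=t_0$, $\sigma(t_0)=r(t_0)$, $\flo{t_0+1-\tau(t_0)} = \flo{1} = 1$, and $s(t_0) \in \Lambda$. Consequently the extended state $\bar r(t_0) = (r(t_0), s(t_0), r(t_0), 1)$ coincides with an admissible \emph{initial} extended state of a process in $\mathcal T_{s, \Lambda}'$, for which the first observation happens at the initial instant. Since $t_0$ is the first hitting time of $\Lambda$ by the Markov chain $s$ — a stopping time — I would invoke the strong Markov property together with the time-homogeneity established in Proposition~\ref{prop:Markov}: conditioned on $\{t_0 = m\}$ and on $\bar r(m)$, the shifted trajectory $\{x(m+j)\}_{j\ge 0}$ is distributed exactly as a solution of $\Sigma_K$ under an observation process in $\mathcal T_{s, \Lambda}'$ with initial state $x(m)$. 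The assumed stabilization of $(\Sigma, \mathcal T_{s, \Lambda}')$ then furnishes constants $C>0$, $\epsilon \in [0,1)$ with
\begin{equation*}
E\bigl[\norm{x(m+j)}^2 \mid t_0 = m,\ \bar r(m)\bigr] \le C\epsilon^{j}\norm{x(m)}^2 .
\end{equation*}

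Finally I would assemble the two pieces by summing over the at most $\tau+1$ possible values $m$ of $t_0$ and applying the tower property: for $k \ge \tau$,
\begin{equation*}
E[\norm{x(k)}^2] \le \sum_{m=0}^{\tau} C\epsilon^{\,k-m}\, E[\norm{x(m)}^2\,\onev(t_0 = m)] \le C(\tau+1)C_1\,\epsilon^{-\tau}\epsilon^{k}\norm{x_0}^2 ,
\end{equation*}
where I used $\epsilon^{-m}\le \epsilon^{-\tau}$ and $E[\norm{x(m)}^2\onev(t_0=m)] \le C_1\norm{x_0}^2$. For $0 \le k < \tau$ the early-window bound gives $E[\norm{x(k)}^2] \le C_1\norm{x_0}^2 \le C_1\epsilon^{-\tau}\epsilon^{k}\norm{x_0}^2$. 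Taking $C' = \max\{C(\tau+1)C_1,\, C_1\}\epsilon^{-\tau}$ and the same $\epsilon$ yields~\eqref{eq:MSS} uniformly over all initial data and all $t \in \mathcal T_{s,\Lambda}$, so $K$ stabilizes $(\Sigma, \mathcal T_{s,\Lambda})$. I expect the main obstacle to be the rigorous application of the strong Markov property at the random time $t_0$ — in particular verifying that the restarted extended state genuinely matches an initial condition of a $\mathcal T_{s,\Lambda}'$ process, and that this conditioning interacts correctly with the uniformity over initial data demanded by the definition of mean square stability.
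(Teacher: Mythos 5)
Your proposal is correct and follows essentially the same route as the paper's own (sketched) proof: the trivial inclusion $\mathcal T_{s,\Lambda}' \subset \mathcal T_{s,\Lambda}$ for one direction, and for the converse a split at the first observation time $t_0 \le \tau$, a crude norm bound on the pre-observation window, a restart of the stabilized $\mathcal T_{s,\Lambda}'$ dynamics at $t_0$, and a summation over the finitely many values of $t_0$. Your version is somewhat more explicit than the paper's sketch in justifying the restart step via the extended chain of Proposition~\ref{prop:Markov} and the Markov property (note that since each event $\{t_0=m\}$ is determined by $s(0),\dotsc,s(m)$, the ordinary Markov property at the fixed time $m$ already suffices), but this is a refinement of, not a departure from, the paper's argument.
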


\begin{sketchproof}{Sketch of the proof}
If $K$ stabilizes $(\Sigma, \mathcal T_{s, \Lambda})$ then $K$ clearly
stabilizes $(\Sigma, \mathcal T_{s, \Lambda}')$ because $\mathcal
T_{s, \Lambda}' \subset \mathcal T_{s, \Lambda}$. Next assume that $K$
stabilizes $(\Sigma, \mathcal T_{s, \Lambda}')$. Then. by the above
observation~\eqref{eq:calT_s'=}, there exist $C>0$ and $\epsilon\in
[0, 1)$ such that the solution $x$ of $\Sigma_K$ satisfies
\eqref{eq:MSS} for all \mbox{$x_0 \in \mathbb{R}^n$}, \mbox{$r_0 \in
\oneto N$}, and $s_0\in \Lambda$. Let us show that $K$ stabilizes
$(\Sigma, \mathcal T_{s, \Lambda})$. Let $x_0 \in \mathbb{R}^n$,
\mbox{$r_0 \in \oneto N$}, $s_0 \in \oneto M$, $\sigma_0 \in \oneto
N$, and $\tau_0 <0$ be arbitrary. By the assumption, there exist
\mbox{$p_0, \dotsc, p_\tau \geq 0$} such that {$\sum_{k=0}^\tau p_k =
1$} and $\Pr(t_0 = k_0) = p_{k_0}$. Fix a \mbox{$k_0 \in \{0, \dotsc,
\tau\}$} and consider the case $t_0 = k_0$. Let $C' = \max \{
\norm{A_i + B_i K_{\delta, \gamma}} \}_{i\in \oneto N, \delta\in
\oneto N, \gamma \in \oneto T}$, where $\norm{\cdot}$ denotes the
maximum singular value of a matrix. Then one can show
$E[\norm{x(k_0)}^2] \leq C'^{2k_0} \norm{x_0}^2$. {Since $x$ follows
the stabilized dynamics after $k=t_0$} we obtain $E[\norm{x(k)}^2]
\leq C \epsilon^{k-k_0} C'^{2k_0} \norm{x_0}^2$, which happens with
probability~$p_{k_0}$. Therefore, taking the summation of this
inequality  with respect to $k_0$ we can actually derive
\mbox{$E[\norm{x(k)}^2] \leq \sum_{k_0=0}^\tau p_{k_0} C
\epsilon^{k-k_0} C'^{2k_0} \norm{x_0}^2 \leq \left(\tau
CC'^{2\tau}\right) \epsilon^k \norm{x_0}^2$.} Hence $K$ stabilizes
$(\Sigma, \mathcal T_{s, \Lambda})$.
\end{sketchproof}

\section{Conclusion}

In this paper we studied the state-feedback stabilization of
discrete-time Markov jump linear systems when the observation of the
Markov-state by a controller is time-randomized by another Markov
chain. Using an embedding of the Markov state to another Markov chain,
we transformed the Markov jump linear system with time-randomized
observations to the one with clustered observations. Based on this
transformation we derived linear matrix inequalities for finding
state-feedback stabilizing gains. The proposed method can, in a
unified way, treat time-random observations including periodic and
renewal-type observations studied in the literature. A numerical
example is presented to show the effectiveness of the proposed method.


\end{document}